\tikzset{autosave background path/.code={
		\ifcsname tikz@fig@name\endcsname
		\tikzset{name path=bp\tikz@fig@name}%
		\fi}}   
\newtheorem*{corollary*}{Corollary}
\newtheorem{theorem}{Theorem}[section]
\newtheorem{lemma}[theorem]{Lemma}
\newtheorem{question}[theorem]{Question}
\newtheorem*{claim*}{Claim}
\theoremstyle{definition}
\newtheorem*{theorem }{Theorem}
\theoremstyle{remark}
\numberwithin{equation}{theorem}
\renewcommand*\env@matrix[1][\
arraystretch]{%
  \edef\arraystretch{#1}%
  \hskip -\arraycolsep
  \let\@ifnextchar\new@ifnextchar
  \array{*\c@MaxMatrixCols c}}
\renewcommand{\mod}{\operatorname{mod}}
\newcommand{\Hom}{\operatorname{Hom}}
\renewcommand{\mod}{\operatorname{mod}}
\newcommand{\Tr}{\operatorname{Tr}}
\begin{document}

\title{On the extension-closed property for the subcategory $\Tr(\Omega^2(\mod-A))$}
\date{\today}

\subjclass[2010]{Primary 16G10, 16E10}

\keywords{extension-closed subcategory, Auslander-Reiten translate, syzygies}
\author{Bernhard B\"ohmler}
\address{FB Mathematik, TU Kaiserslautern, Gottlieb-Daimler-Str. 48, 67653 Kaiserslautern, Germany}
\email{boehmler@mathematik.uni-kl.de}

\author{Ren\'{e} Marczinzik}
\address{Institute of algebra and number theory, University of Stuttgart, Pfaffenwaldring 57, 70569 Stuttgart, Germany}
\email{marczire@mathematik.uni-stuttgart.de}

\begin{abstract}
We present a monomial quiver algebra $A$ having the property that the subcategory~$\Tr(\Omega^2(\mod-A))$ is not extension-closed. This answers a question raised by Idun Reiten.
\end{abstract}

\maketitle
\section*{Introduction}
For a noetherian ring $A$, let $\mod-A$ denote the category of finitely generated right $A$-modules. Moreover, let $\Omega^i(\mod-A)$ denote the full subcategory of $i$-th syzygy modules of finitely generated $A$-modules.
The study of syzygies is of major importance in homologcal algebra and representation theory, we mention for example Hilbert's famous syzygy theorem \cite{H} and the work of Auslander and Reiten on the subcategories of syzygy modules for noetherian rings in \cite{AR}. Throughout, we assume that all subcategories are full, unless stated otherwise. Let $\mathcal{C}$ denote a subcategory of $\mod-A$ for a noetherian ring $A$ and let $\Tr$ denote the Auslander-Bridger transpose that gives an equivalence between the stable categories~$\underline{\mod}-A$ and $\underline{\mod}-A^{op}$, see for example \cite{AB}.
Following \cite{R}, we define $\Tr(\mathcal{C})$ to be the smallest additive subcategory of $\mod-A^{op}$ containing the modules~$\Tr(M)$ for $M \in \mathcal{C}$ and all the projective $A^{op}$-modules.
Recall that a subcategory $\mathcal{C}$ is said to be closed under extensions if for any $M, N \in \mathcal{C}$, also every module~$X$ is in $\mathcal{C}$ if there is an exact sequence of the form~$0 \rightarrow M \rightarrow X \rightarrow N \rightarrow 0.$
The extension-closedness of certain subcategories can give important information on the algebra. We mention for example Theorem 0.1 in~\cite{AR} which shows that the property that the subcategories $\Omega^i(\mod-A)$ are extension-closed for~$1 \leq i \leq n$ for an algebra $A$ is equivalent to $A$ being quasi $n$-Gorenstein in the sense of \cite{IZ}.
In \cite{R}, it is noted that even if a subcategory $\mathcal{C}$ is not closed under extensions, the subcategory $\Tr(\mathcal{C})$ can still be closed under extensions.
This is for example true for~$\mathcal{C}=\Omega^1(\mod-A)$ as was remarked in \cite{R}.
Idun Reiten states in the paragraph before Proposition~1.1 in \cite{R} that the answer to the following question is not known:
\begin{question} \label{question}
Let $A$ be a noetherian ring and $i>1$. Is $\Tr(\Omega^{i}(\mod-A))$ closed under extensions?

\end{question}
\indent In this article we give a negative answer to this question. Our main result is:
\begin{theorem} \label{maintheorem}
Let $A=KQ/I$ be the finite dimensional quiver algebra over a field $K$ where $Q$ is given by
\begin{tikzcd}
2 \arrow[r, "z", shift left=0.75ex] & 1 \arrow[loop, distance=3em, out=35, in=-35, "x"] \arrow[l, "y", shift left=0.75ex]
\end{tikzcd}
and the 
\noindent relations are given by~$I=\langle xy, yz, zx, x^3 \rangle$. Then the subcategory~$\Tr(\Omega^{2}(\mod-A))$ is not closed under extensions.

\end{theorem}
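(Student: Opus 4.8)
The plan is to reduce the statement to an explicit finite computation. I first record the module category of $A$. The algebra is monomial, and checking the relations $xy$, $yz$, $zx$, $x^{3}$ against the special biserial conditions at the two vertices shows that $A$ is a string algebra with no bands; hence $A$ is representation-finite, its indecomposable modules are parametrised by their strings, and one finds there are exactly $13$ of them, two of which --- $P_{1}:=e_{1}A$ and $P_{2}:=e_{2}A$ --- are projective, all of $K$-dimension at most five. It will be convenient that $A\cong A^{op}$, via the algebra isomorphism fixing the two vertices and the loop $x$ and interchanging $y$ and $z$; using this I identify $\mod-A^{op}$ with $\mod-A$, so that $\Tr$ becomes an endofunctor of $\underline{\mod}-A$, and I run the whole argument inside $\mod-A$, transporting the conclusion back to $\mod-A^{op}$ only at the end.

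Next I compute $\Omega^{2}(\mod-A)$. Going through the $13$ indecomposables, taking minimal projective covers and forming first and second syzygies, the indecomposables that occur as direct summands of second syzygies turn out to be exactly $S_{1}$, $S_{2}$, $M_{x}$ (the two-dimensional module on which $x$ acts as a nonzero nilpotent and $y,z$ act as zero) and $M_{y}$ (the uniserial module with top $S_{1}$ and socle $S_{2}$); adding the two projectives, $\Omega^{2}(\mod-A)=\add\{P_{1},P_{2},S_{1},S_{2},M_{x},M_{y}\}$. As $\Tr$ annihilates projectives and restricts to a bijection on indecomposable non-projectives, $\Tr(\Omega^{2}(\mod-A))$ has as its indecomposable objects the two projectives together with $\Tr S_{1},\Tr S_{2},\Tr M_{x},\Tr M_{y}$, which I identify explicitly by dualising the minimal presentations. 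In particular $\Tr(\Omega^{2}(\mod-A))$ contains exactly six of the thirteen indecomposables, so seven of them lie outside it.

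Finally I exhibit the failing extension. Among the six modules of $\Tr(\Omega^{2}(\mod-A))$ are $M_{y}$ and a four-dimensional indecomposable $W$ with $\top W\cong\soc W\cong S_{1}\oplus S_{2}$ (these being the images of $\Tr M_{y}$ and $\Tr S_{1}$). Computing $\Omega W$ gives the three-dimensional indecomposable $N$ with $\top N\cong S_{1}$ and $\soc N\cong S_{1}\oplus S_{2}$, so that, since $P_{1}\oplus P_{2}$ is the projective cover of $W$,
\[
\Ext^{1}_{A}(W,M_{y})=\operatorname{coker}\bigl(\Hom_{A}(P_{1}\oplus P_{2},M_{y})\longrightarrow\Hom_{A}(N,M_{y})\bigr)\neq 0 .
\]
Forming the pushout of the sequence $0\to\Omega W\to P_{1}\oplus P_{2}\to W\to 0$ along a representative of a nonzero class, the resulting non-split extension works out to
\[
0\longrightarrow M_{y}\longrightarrow N\oplus P_{2}\longrightarrow W\longrightarrow 0 .
\]
Here $M_{y},W\in\Tr(\Omega^{2}(\mod-A))$ whereas $N$ is one of the seven indecomposables not in $\Tr(\Omega^{2}(\mod-A))$, so $N\oplus P_{2}\notin\Tr(\Omega^{2}(\mod-A))$; transporting back along $A\cong A^{op}$ proves the theorem.

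I expect the main obstacle to be the bookkeeping in the middle step: one must compute all the syzygies and transposes correctly and, above all, be certain which of the $13$ indecomposables do and do not belong to $\Tr(\Omega^{2}(\mod-A))$ --- in particular that $N$ genuinely lies outside it, since the whole argument hinges on this. By the Auslander--Bridger theory, $N$ fails to lie in the subcategory exactly because $\Tr N$ is not a second syzygy, i.e.\ $\Ext^{1}_{A}(\Tr N,A)\neq 0$ or $\Ext^{2}_{A}(\Tr N,A)\neq 0$; checking this inequality directly is an alternative, self-contained way to finish. A secondary, purely clerical difficulty is keeping the left/right and $A$ versus $A^{op}$ conventions straight throughout.
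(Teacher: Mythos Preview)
Your outline is correct and runs parallel to the paper's proof, with one genuine difference in the transport step. The paper does \emph{not} use the isomorphism $A\cong A^{op}$; instead it applies the duality $D=\Hom_K(-,K)$, so that $\Tr(\Omega^2(\mod\text{-}A))$ is extension-closed iff $\tau(\Omega^2(\mod\text{-}A))$ is, and then works entirely with $\tau$. Under the self-duality $\sigma=\phi^*\circ D$ of $\mod\text{-}A$ that relates your identification to the paper's, your counterexample is exactly the image of theirs: your $W$ is the paper's $M_1=A/(x+z)A$, your $M_y$ is $\sigma$ of the paper's $M_2=P_2/S_2$, and your middle term $N\oplus P_2$ (with $N=P_1/S_1$) is $\sigma$ of the paper's $P_2\oplus U$ (with $U=A/(x+y+z)A$); indeed $\tau(N)=U$. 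Your derivation of the middle term via pushout is correct: the kernel of $N\twoheadrightarrow M_y$ is $\langle x^2\rangle\cong S_1\subset A$, so the pushout is $A/\langle x^2\rangle\cong(P_1/S_1)\oplus P_2$. The paper's route via $\tau$ has the advantage of being available for any Artin algebra, while yours depends on the accident $A\cong A^{op}$; on the other hand, your plan to simply list the six indecomposables in the subcategory and observe that $N$ is not among them is more elementary than the paper's appeal to the Auslander--Solberg criterion $X\mid P'\oplus\Omega^n\Omega^{-n}X$.

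One genuine slip: your proposed ``alternative, self-contained way to finish'' is mis-stated. You want to show $\Tr N$ is not a second syzygy. The Auslander--Bridger equivalence is that $M$ is $n$-torsionfree iff $\Ext^i_A(\Tr M,A)=0$ for $1\le i\le n$, and $n$-torsionfree \emph{implies} $n$-th syzygy, not conversely. So the relevant Ext groups would be $\Ext^i_A(N,A)$, not $\Ext^i_A(\Tr N,A)$, and even then nonvanishing only shows $\Tr N$ is not $2$-torsionfree, which does not by itself prevent $\Tr N$ from being a second syzygy. Since your main argument (direct enumeration) is sound, this does not affect the validity of the proposal, but you should drop or correct that paragraph.
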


\section{The proof}
In this section we prove Theorem \ref{maintheorem}. We assume that the reader is familiar with the basics of representation theory and homological algebra of finite dimensional algebras and refer for example to \cite{ARS} and \cite{SkoYam}.
Let $A=KQ/I$ be the finite dimensional quiver algebra over a field $K$ where~$Q$ is given by~
\begin{tikzcd}
	2 \arrow[r, "z", shift left=0.75ex] & 1 \arrow[loop, distance=3em, out=35, in=-35, "x"] \arrow[l, "y", shift left=0.75ex]
\end{tikzcd}
and the relations are given by $I=\langle xy, yz, zx, x^3 \rangle$. 
Let $S_i$ denote the simple $A$-modules, $J$ the Jacobson radical of $A$ and $D=\Hom_K(-,K)$ the natural duality.
$A$ has vector space dimension 7 over the field $K$. Let $e_i$ denote the primitive idempotents of the quiver algebra $A$ corresponding to the vertices $i$ for $i=1,2$.
The indecomposable projective $A$-module $P_1=e_1 A= \langle e_1, x ,y , x^2 \rangle$ has dimension vector $[3,1]$ and the indecomposable projective $A$-modules $P_2=e_2 A= \langle e_2, z, zy \rangle $ has dimension vector $[1,2]$. We remark that $P_2$ is isomorphic to the indecomposable injective module $I_2=D(Ae_2)$.
The indecomposable injective module $I_1=D(Ae_1)$ has dimension vector $[3,1]$.
We give the Auslander-Reiten quiver of $A$ here: 
\begin{center}
\scalebox{.77}{
\begin{tikzpicture}[center bullet/.code 2 args={%
	\pgfmathsetmacro{\mydw}{0.5*(width("${}\vphantom{\bullet}#2$")-width("$#1{}$"))}%
	\tikzset{xshift=\mydw pt,yshift=0.5ex,text depth=0.25ex,text height=1em,
		autosave background path,
		execute at begin node={$#1\bullet#2$}}},
pics/connect/.style n args={4}{code={%
		\path[name path=tmp] (#2) -- (#4);
		\tikzset{name intersections={of=tmp and bp#1,by=tmp1},
			name intersections={of=tmp and bp#3,by=tmp2}}
		\draw[pic actions] (tmp1) -- (tmp2);   
}}]
\path (-2,20) coordinate (v2) (2,20) coordinate (v6) (0,24) coordinate (v7) (-2,16) coordinate (v4) (2,16) coordinate (v1) (-2,12) coordinate (v5) (2,12) coordinate (v3) (-2,8) coordinate (v9) (2,8) coordinate (v8) (-2,4) coordinate (v11) (2,4) coordinate (v10) (-2,0) coordinate (v13) (2,0) coordinate (v12);
\node[center bullet={[1,1]=}{^{2}=e_2J}] (n2) at (v2) {};
\node[center bullet={[1,1]=}{^{6}=P_2/S_2}] (n6) at (v6){};
\node[center bullet={[1,2]=}{^7=P_2}] (n7) at (v7){};
\node[center bullet={[2,2]=}{^4=A/(x+z)A}] (n4) at (v4){};
\node[center bullet={[1,0]=}{^1=S_1}] (n1) at (v1){};
\node[center bullet={[2,1]=}{^5={\scriptstyle{A/(x+y+z)A}}}] (n5) at (v5){};
\node[center bullet={[2,1]=}{^3=P_1/S_1}] (n3) at (v3){};
\node[center bullet={[2,0]=}{^8=P_1/\mbox{Soc}(P_1)}] (n8) at (v8){};
\node[center bullet={[3,2]=}{^9=A/e_2J}] (n9) at (v9){};
\node[center bullet={[3,1]=}{^{10}=I_1}] (n10) at (v10){};
\node[center bullet={[3,1]=}{^{11}=P_1}] (n11) at (v11){};
\node[center bullet={[3,0]=}{^{12}=P_1/S_2}] (n12) at (v12){};
\node[center bullet={[0,1]=}{^{13}=S_2}] (n13) at (v13){};

\path[-open triangle 45] pic{connect={n2}{v2}{n7}{v7}}
pic{connect={n2}{v2}{n1}{v1}} pic{connect={n7}{v7}{n6}{v6}}
pic{connect={n4}{v4}{n2}{v2}} pic{connect={n1}{v1}{n6}{v6}}
pic{connect={n6}{v6}{n4}{v4}} pic{connect={n5}{v5}{n4}{v4}}
pic{connect={n3}{v3}{n1}{v1}} pic{connect={n4}{v4}{n3}{v3}}
pic{connect={n1}{v1}{n5}{v5}} pic{connect={n9}{v9}{n5}{v5}}
pic{connect={n8}{v8}{n3}{v3}} pic{connect={n3}{v3}{n9}{v9}}
pic{connect={n5}{v5}{n8}{v8}} pic{connect={n11}{v11}{n9}{v9}}
pic{connect={n10}{v10}{n8}{v8}} pic{connect={n9}{v9}{n10}{v10}}
pic{connect={n8}{v8}{n11}{v11}} pic{connect={n13}{v13}{n11}{v11}}
pic{connect={n12}{v12}{n10}{v10}} pic{connect={n11}{v11}{n12}{v12}}
pic{connect={n10}{v10}{n13}{v13}};

\path[-open triangle 45] (n6) edge[bend angle=30, bend left, dashed, red] node[left] {}  (n2);
\path[-open triangle 45] (n2) edge[bend angle=30, bend left, dashed, red] node[left] {}  (n6);
\path[-open triangle 45] (n1) edge[bend angle=30, bend left, dashed, red] node[left] {}  (n4);
\path[-open triangle 45] (n4) edge[bend angle=30, bend left, dashed, red] node[left] {}  (n1);
\path[-open triangle 45] (n3) edge[bend angle=20, bend left, dashed, red] node[left] {}  (n5);
\path[-open triangle 45] (n5) edge[bend angle=20, bend left, dashed, red] node[left] {}  (n3);
\path[-open triangle 45] (n9) edge[bend angle=15, bend left, dashed, red] node[left] {}  (n8);
\path[-open triangle 45] (n8) edge[bend angle=15, bend left, dashed, red] node[left] {}  (n9);
\path[-open triangle 45] (n13) edge[bend angle=20, bend left, dashed, red] node[left] {}  (n12);
\path[-open triangle 45] (n12) edge[bend angle=20, bend left, dashed, red] node[left] {}  (n13);

\path[-open triangle 45] (n10) edge[dashed, red] node[left] {}  (n11);
\end{tikzpicture}
}
\end{center}
\noindent We remark that the algebra $A$ is a representation-finite string algebra and thus the determination of the Auslander-Reiten quiver is well known, see for example \cite{BR}. We leave the verficiation of the Auslander-Reiten quiver of $A$ to the reader. The Auslander-Reiten quiver can also be verified using \cite{QPA}, since by the theory in \cite{BR}, the Auslander-Reiten quiver does not depend on the field because $A$ is a representation-finite string algebra.\newline\newline
The dashed arrows in the Auslander-Reiten quiver indicate the Auslander-Reiten translates and the other arrows the irreducible maps.
There are $13$ indecomposable $A$-modules and we label them with their dimension vectors and an easy description via indecomposable projectives or injectives, their radicals, simple modules, or ideals in the algebra $A$.
For finite dimensional algebras we have $D \Tr=\tau$, the Auslander-Reiten translate. Since $D$ is a duality, the subcategory $\Tr (\Omega^2(\mod-A))$ is extension-closed if and only if~$\tau ( \Omega^2(\mod-A))$ is extension-closed. We will consider the subcategory $\tau ( \Omega^2(\mod-A))$ in the following.
We sometimes use the notation $\tau_i:=\tau(\Omega^{i-1})$ for the higher Auslander-Reiten translates that play an important role for example in the theory of cluster-tilting subcategories, see \cite{Iya}.
Let~$\tilde{I}:=(x+z)A$ denote the right ideal generated by $x+z$, which has vector basis basis $\{x+z, x^2, zy\}$. We remark that~$M_1:=A/\tilde{I}$ is an indecomposable $A$-module with dimension vector $[2,2]$.
Let $M_2:=P_2/S_2= e_2 A/(zy) A$. This is an indecomposable $A$-module with dimension vector $[1,1]$.

\begin{lemma}
The following assertions hold:
\begin{enumerate}
\item $\tau_3(M_1) \cong M_1$ and hence $M_1 \in \tau ( \Omega^2(\mod-A))$.
\item $\tau_3(M_2) \cong M_2$ and hence $M_2 \in \tau ( \Omega^2(\mod-A))$.
\end{enumerate}

\end{lemma}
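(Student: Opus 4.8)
The plan is to break each of the two statements into two routine steps. Since by definition $\tau_3 = \tau \circ \Omega^2$, what has to be produced is an isomorphism $\tau(\Omega^2(M_i)) \cong M_i$ for $i = 1, 2$; so first I would identify the (minimal) second syzygy of $M_i$, and then compute its Auslander--Reiten translate, both of which can be done by hand or read off the Auslander--Reiten quiver displayed above.

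For the syzygies: $M_2 = P_2/S_2$ has projective cover $P_2$ with kernel $\soc(P_2) = S_2$, and $S_2$ in turn has projective cover $P_2$ with kernel $\rad(P_2) = e_2 J$, so $\Omega^2(M_2) \cong e_2 J$. For $M_1 = A/\tilde I$ one has $\tilde I \subseteq J$, hence $\top(M_1) = A/J = S_1 \oplus S_2$ and the projective cover of $M_1$ is $P_1 \oplus P_2$, giving $\Omega^1(M_1) \cong \tilde I = (x+z)A$. The right-module homomorphism $P_1 = e_1 A \to (x+z)A$ with $e_1 \mapsto x + z$ sends $x \mapsto x^2$, $y \mapsto zy$ and $x^2 \mapsto 0$, so it is a projective cover of $\tilde I$ with kernel $\langle x^2 \rangle \cong S_1$; thus $\tilde I \cong P_1/S_1$ and $\Omega^2(M_1) = \Omega(P_1/S_1) \cong \langle x^2 \rangle \cong S_1$. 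Note that neither $\Omega^2(M_1) \cong S_1$ nor $\Omega^2(M_2) \cong e_2 J$ has a projective summand, so these are the genuine second syzygies.

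It remains to compute $\tau(S_1)$ and $\tau(e_2 J)$. The dashed arrows in the Auslander--Reiten quiver show that $\{S_1, M_1\}$ and $\{e_2 J, M_2\}$ are $\tau$-orbits of period two, so $\tau(S_1) \cong M_1$ and $\tau(e_2 J) \cong M_2$; one may cross-check this using the almost split sequences $0 \to M_1 \to e_2 J \oplus (P_1/S_1) \to S_1 \to 0$ and $0 \to M_2 \to M_1 \to e_2 J \to 0$ together with a dimension-vector count (e.g. $\dim \tau(S_1) = [1,1] + [2,1] - [1,0] = [2,2]$, and $M_1$ is the unique indecomposable with dimension vector $[2,2]$). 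Combining the two steps, $\tau_3(M_i) = \tau(\Omega^2(M_i)) \cong M_i$ for $i = 1, 2$; and since $\Omega^2(M_1) = S_1$ and $\Omega^2(M_2) = e_2 J$ both lie in $\Omega^2(\mod-A)$, these isomorphisms exhibit $M_1$ and $M_2$ as objects of $\tau(\Omega^2(\mod-A))$. The only point needing an actual computation is the identification $\tilde I \cong P_1/S_1$; everything else is immediate from the Auslander--Reiten quiver, so I do not expect any genuine obstacle.
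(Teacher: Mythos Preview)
Your proposal is correct and follows essentially the same approach as the paper: compute $\Omega^2(M_i)$ via the explicit short exact sequences $0 \to \tilde I \to A \to M_1 \to 0$, $0 \to S_1 \to P_1 \to \tilde I \to 0$ and $0 \to S_2 \to P_2 \to M_2 \to 0$, $0 \to e_2 J \to P_2 \to S_2 \to 0$, and then read off $\tau(S_1) \cong M_1$ and $\tau(e_2 J) \cong M_2$ from the dashed arrows in the Auslander--Reiten quiver. Your added details (the explicit description of the map $e_1 \mapsto x+z$, the identification $\tilde I \cong P_1/S_1$, and the dimension-vector cross-check of the almost split sequences) go slightly beyond what the paper writes out but do not change the argument.
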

\begin{proof}
\begin{enumerate}
\item We have the exact sequences
$$0 \rightarrow (x+z)A \rightarrow A \rightarrow A/(x+z)A \rightarrow 0$$
and
$$0 \rightarrow S_1 \rightarrow e_1 A \rightarrow (x+z)A \rightarrow 0.$$
It follows that $\Omega^2(M_1) \cong S_1$.
We can see from the Auslander-Reiten quiver that $\tau(S_1) \cong M_1$. Therefore
$\tau(\Omega^2(M_1)) \cong \tau(S_1) \cong M_1$.
\item We have the exact sequences
$$0 \rightarrow S_2 \rightarrow e_2 A \rightarrow e_2A /S_2 \rightarrow 0$$
and 
$$0 \rightarrow e_2 J^1 \rightarrow e_2 A \rightarrow S_2 \rightarrow 0.$$
It follows that $\Omega^2(M_2) \cong e_2 J$ and we can see from the Auslander-Reiten quiver that $\tau(e_2J) \cong M_2$. Therefore
$\tau(\Omega^2(M_2)) \cong \tau(e_2 J) \cong M_2$.
\end{enumerate}
\end{proof}

\noindent Now, we construct a module $W$ such that $W$ is an extension of $M_2$ by $M_1$, but $W$ does not lie in~$\tau ( \Omega^2(\mod-A))$.

\begin{lemma}
There is a short exact sequence 
$$0 \rightarrow M_1 \xrightarrow{u} W \rightarrow M_2 \rightarrow 0.$$
We have $W \cong P_2 \oplus U$, where $U=A/((x+y+z)A)$ is indecomposable.
The short exact sequence is therefore not split exact.
\end{lemma}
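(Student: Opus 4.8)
The plan is to realise $W$ as a pushout. First I would record two elementary facts. The short exact sequence $0\to S_2\to P_2\to M_2\to 0$ already used in the proof of the previous lemma (with $S_2=\soc(P_2)$) exhibits $S_2$ as a first syzygy of $M_2$. On the other side, a direct inspection of $M_1=A/\tilde I$ shows that $\soc(M_1)\cong S_1\oplus S_2$; in particular $M_1$ contains a unique submodule isomorphic to $S_2$, so there is a monomorphism $\psi\colon S_2\to M_1$ onto it.

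Next I would take the pushout of the socle inclusion $S_2\hookrightarrow P_2$ along $\psi$. Pushing the sequence $0\to S_2\to P_2\to M_2\to 0$ out along $\psi$ produces a short exact sequence $0\to M_1\xrightarrow{u}W\to M_2\to 0$ with $W=(P_2\oplus M_1)/\{(s,-\psi(s)):s\in S_2\}$, which is automatically exact; this yields the first assertion of the lemma.

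The main point is then to identify $W$. The composite $P_2\hookrightarrow P_2\oplus M_1\twoheadrightarrow W$ is injective because $\psi$ is a monomorphism, and a direct computation gives $W/P_2\cong M_1/\psi(S_2)$. It remains to check $M_1/\psi(S_2)\cong U$: since $\tilde I=(x+z)A\subseteq(x+y+z)A$, the quotient map $M_1=A/(x+z)A\twoheadrightarrow A/(x+y+z)A=U$ has one-dimensional kernel, supported at vertex $2$ and hence isomorphic to $S_2$, and this kernel must coincide with the unique $S_2$-submodule $\psi(S_2)$ of $M_1$ (alternatively one can read $M_1/S_2\cong U$ directly off the Auslander-Reiten quiver). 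Thus we obtain a short exact sequence $0\to P_2\to W\to U\to 0$, and since $P_2\cong I_2$ is injective it splits, so $W\cong P_2\oplus U$. That $U$ is indecomposable is visible from the Auslander-Reiten quiver (it is the module with dimension vector $[2,1]$), and can also be checked by noting that $\End_A(U)$ is local.

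Finally, for non-splitness: if the sequence $0\to M_1\to W\to M_2\to 0$ were split we would have $W\cong M_1\oplus M_2$, so $M_2$ would be a direct summand of $W$; but $W\cong P_2\oplus U$ and $M_1,M_2,P_2,U$ are pairwise non-isomorphic indecomposables (clear from the Auslander-Reiten quiver), so by the Krull-Schmidt theorem $M_2$ is not a direct summand of $W$, a contradiction. The step I expect to be the main obstacle is the identification $W/P_2\cong U$, together with pinning down $\soc(M_1)$ so that $\psi$ is unambiguous; once these are in place, splitting off the injective module $P_2$ and the Krull-Schmidt argument are purely formal.
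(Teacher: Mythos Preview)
Your argument is correct and takes a genuinely different route from the paper's. The paper writes down $W$ and $u$ explicitly as quiver representations with concrete matrices, then leaves the isomorphism $W\cong P_2\oplus U$ as a direct matrix verification. You instead construct $W$ abstractly as the pushout of $S_2\hookrightarrow P_2$ along the inclusion $\psi\colon S_2\hookrightarrow M_1$, observe that the canonical map $P_2\to W$ is injective (because $\psi$ is), compute $W/P_2\cong M_1/\psi(S_2)$, and then identify this quotient with $U$ via the containment $(x+z)A\subset (x+y+z)A$ (noting $(x+y+z)e_1=x+z$ and that the extra generator $y$ lies at vertex~$2$, so the one-dimensional kernel is the unique copy of $S_2$ inside $M_1$). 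Splitting off the injective $P_2\cong I_2$ then gives $W\cong P_2\oplus U$ without any matrix manipulation, and Krull--Schmidt finishes non-splitness exactly as in the paper. Your approach explains \emph{why} $P_2$ and $U$ appear (the former because $P_2$ is the projective cover of $M_2$ and is injective, the latter because $U=M_1/S_2$), whereas the paper's explicit description has the advantage that it pins down the extension by a single concrete representative and is immediately machine-checkable. Both arguments rely on the same ambient facts ($P_2\cong I_2$, indecomposability of $U$ from the Auslander--Reiten quiver), so neither is strictly more elementary; yours is shorter and more structural.
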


\newpage

\begin{proof}
We define $u=(u_1,u_2)$ and $W$ as follows using quiver representations:
\begin{center}
\begin{tikzpicture}

\node (v1) at (-2,4) {$k^2$};
\node (v2) at (2,4) {$k^2$};
\node (v3) at (-2,-4) {$k^3$};
\node (v4) at (2,-4) {$k^3$};

\draw [-open triangle 45] (v1) to [out=150,in=230,looseness=10] node[left] {$\begin{pmatrix}[1] 0 & 1\\ 0 & 0\end{pmatrix}=x$} (v1);

\draw [->] (v1) edge node[left] {$\begin{pmatrix}[1] 1 & 0 & 0\\ 0 & 1 & 0\end{pmatrix}=u_1$}  (v3);
\draw [->] (v2) edge node[right] {$u_2=\begin{pmatrix}[1] 1 & 0 & 0\\ 0 & 1 & 0\end{pmatrix}$}  (v4);

\draw [-open triangle 45] (v1) edge[bend left] node[above] {$y=\begin{pmatrix}[1] 1 & 0\\ 0 & 0\end{pmatrix}$}  (v2);
\draw [-open triangle 45] (v2) edge[bend left] node[below] {$z=\begin{pmatrix}[1] 0 & 0\\ 0 & -1\end{pmatrix}$}  (v1);

\draw [-open triangle 45] (v3) edge[bend left] node[above] {$y=\begin{pmatrix}[1] 1 & 0 & 0\\ 0 & 0 & 0\\ 1 & 0 & 0\end{pmatrix}$}  (v4);
\draw [-open triangle 45] (v4) edge[bend left] node[below] {$z=\begin{pmatrix}[1] 0 & 0 & 0\\ 0 & -1 & 0\\ 0 & 0 & 1\end{pmatrix}$}  (v3);

\draw [-open triangle 45] (v3) to [out=150,in=230,looseness=10] node[left] {$\begin{pmatrix}[1] 0 & 1 & 0\\ 0 & 0 & 0\\ 0 & 0 & 0\end{pmatrix}=x$} (v3);

\end{tikzpicture}
\end{center}
It is then easy to see that $u$ is a monomorphism whose domain is isomorphic to $M_1$, and that the cokernel of $u$ is isomorphic to $M_2$.
The quiver representation of $P_2$ looks as follows:
\begin{center}
\begin{tikzpicture}

\node (v1) at (-2,4) {$k$};
\node (v2) at (2,4) {$k^2$};

\draw [-open triangle 45] (v1) to [out=150,in=230,looseness=10] node[left] {$(0)=x$} (v1);

\draw [-open triangle 45] (v1) edge[bend left] node[above] {$y=\begin{pmatrix}[1] 0 & 1\end{pmatrix}$}  (v2);
\draw [-open triangle 45] (v2) edge[bend left] node[below] {$z=\begin{pmatrix}[1] 1\\ 0\end{pmatrix}$}  (v1);

\end{tikzpicture}
\end{center}

The quiver representation of $U$ looks as follows:
\begin{center}
\begin{tikzpicture}

\node (v1) at (-2,4) {$k^2$};
\node (v2) at (2,4) {$k$};

\draw [-open triangle 45] (v1) to [out=150,in=230,looseness=10] node[left] {$\begin{pmatrix}[1] 0 & 1\\ 0 & 0\end{pmatrix}=x$} (v1);

\draw [-open triangle 45] (v1) edge[bend left] node[above] {$y=\begin{pmatrix}[1] 0\\ 0\end{pmatrix}$}  (v2);
\draw [-open triangle 45] (v2) edge[bend left] node[below] {$z=\begin{pmatrix}[1] 0 & -1\end{pmatrix}$}  (v1);

\end{tikzpicture}
\end{center}

Note that the module $U$ appears as the vertex with number $5$ in the Auslander-Reiten quiver of $A$ and is indecomposable.
A direct verification that we leave to the reader shows that $W \cong P_2 \oplus U$. 
This proves that the short exact sequence is not split exact, since $W$ is not isomorphic to the direct sum of the indecomposable modules $M_1$ and $M_2$.
\end{proof}

\begin{lemma}
The module $U=A/(x+y+z)A$ does not lie in $\tau ( \Omega^2(\mod-A))$.
\end{lemma}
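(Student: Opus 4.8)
The plan is to translate non-membership in $\tau(\Omega^2(\mod-A))$ into a statement about second syzygy modules and then rule it out by a syzygy computation. By definition $\tau(\Omega^2(\mod-A))$ is the smallest additive subcategory of $\mod-A$ containing every projective module and every module $\tau(N)$ with $N\in\Omega^2(\mod-A)$; since $\tau$ induces a bijection on isomorphism classes of indecomposable non-projective modules, the indecomposable objects of $\tau(\Omega^2(\mod-A))$ are exactly the indecomposable projectives together with the modules $\tau(N_0)$ where $N_0$ runs over the indecomposable direct summands of the modules in $\Omega^2(\mod-A)$. Thus an indecomposable non-projective module $X$ lies in $\tau(\Omega^2(\mod-A))$ if and only if $\tau^{-1}(X)$ is a direct summand of some module in $\Omega^2(\mod-A)$. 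The module $U$ is indecomposable and non-projective, and reading the Auslander-Reiten quiver one sees that the modules numbered $3$ and $5$ form a $\tau$-periodic pair, so $\tau^{-1}(U)\cong P_1/S_1$. Hence it suffices to show that $P_1/S_1$ is not a direct summand of any second syzygy module.

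For this I would compute the first syzygy $\Omega(N)$ of each of the thirteen indecomposable $A$-modules. Most are immediate from the exact sequences $0\to\rad P_i\to P_i\to S_i\to 0$ together with the descriptions of the indecomposables already recorded; the only computations requiring slightly more care are those for the two indecomposables of dimension vectors $[3,1]$ and $[3,2]$, and everything can be double-checked with \cite{QPA}. The outcome is that the indecomposable modules occurring as direct summands of first syzygy modules are precisely $S_1$, $e_2J$, $P_1/S_1$, $P_1/\soc(P_1)$ and $S_2$; in particular neither $M_1=A/(x+z)A$ nor $U$ is a first syzygy module, even up to projective summands. Inspecting the same table shows moreover that $P_1/S_1$ is a direct summand of $\Omega(X)$ only when $X$ has $M_1$ or $U$ as a direct summand. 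Applying $\Omega$ once more then finishes the argument: if $P_1/S_1$ were a direct summand of some second syzygy module it would be a summand of $\Omega(N)$ for some first syzygy module $N$, which would force $M_1$ or $U$ to be a direct summand of $N$, hence a first syzygy module — contradicting what was just established. (Equivalently, the indecomposable second syzygy modules turn out to be exactly $S_1$, $e_2J$, $P_1/\soc(P_1)$ and $S_2$, and $P_1/S_1$ is not among them.) Combining this with the first step gives $U\notin\tau(\Omega^2(\mod-A))$.

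The bulk of the work is the syzygy bookkeeping of the second step, and the one point there that genuinely has to be verified rather than merely quoted is that $M_1$ and $U$ are not submodules of projective modules: in each case the socle element supported at vertex $1$ would have to be reached from a generator supported at vertex $2$ by applying $z$, but inside any projective module such an element can still be pushed forward nontrivially by $y$, so it cannot lie in the socle. Once this, and the routine determination of the other syzygies, is in place the statement follows.
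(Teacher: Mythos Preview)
Your proposal is correct and reaches the same conclusion as the paper, but the second step---ruling out $P_1/S_1$ as a second syzygy---is carried out by a genuinely different route. The paper invokes the Auslander--Solberg criterion (\cite{AS}, Proposition~3.2): a module $X$ is a summand of an $n$-th syzygy if and only if $X$ is a summand of $P'\oplus\Omega^n(\Omega^{-n}(X))$ for some projective $P'$. It then computes the minimal injective coresolution of $P_1/S_1$ two steps out, obtaining $\Omega^{-2}(P_1/S_1)\cong S_1\oplus M_2$, and checks that $P_1/S_1$ does not occur in $\Omega^2(S_1\oplus M_2)\cong S_1\oplus S_2\oplus e_2J\oplus e_2J$. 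Your approach instead tabulates $\Omega(X)$ for every indecomposable $X$, extracts the list of indecomposable first-syzygy summands, and observes that the only indecomposables whose first syzygy contains $P_1/S_1$ (namely $M_1$ and $U$) are absent from that list. The paper's method is shorter and requires only a handful of targeted (co)syzygy computations, at the cost of quoting an external result; your method is entirely self-contained and elementary, but demands a complete pass through all eleven non-projective indecomposables. Both are perfectly valid for a representation-finite algebra like this one.

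One small remark on your final paragraph: the phrase ``would have to be reached from a generator supported at vertex~$2$ by applying~$z$'' slightly misstates the argument, since in both $M_1$ and $U$ the socle element $\bar x$ is also reached from vertex~$1$ via $x$. What you actually use is that there \emph{exists} an element at vertex~$2$ (namely $\bar e_2$) with $\bar e_2\cdot z$ nonzero and in the socle; in any projective module, however, a nonzero element of the form $p\cdot z$ always satisfies $p\cdot zy\neq 0$, so cannot lie in the socle. With that adjustment the argument is clean.
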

\begin{proof}
We prove the equivalent statement that $\tau^{-1}(U)$ is not a second syzygy module.
From the Auslander-Reiten quiver we can see that $\tau^{-1}(U) = P_1/S_1$.
Now we use the result that a general module~$X$ over an Artin algebra is a direct summand of an $n$-th syzygy module if and only if $X$ is a direct summand of $P' \oplus \Omega^n(\Omega^{-n}(X))$ for some projective module $P'$, see for example Proposition 3.2 of \cite{AS}.
The two short exact sequences
$$0 \rightarrow P_1 /S_1 \rightarrow D(A) \rightarrow S_2 \oplus U \rightarrow 0$$
and
$$0 \rightarrow S_2 \oplus U \rightarrow D(A) \rightarrow S_1 \oplus M_2 \rightarrow 0$$
\noindent give rise to minimal injective coresolutions. Now $\Omega^2(S_1) \cong S_1 \oplus S_2 \oplus e_2J^1$ and $\Omega^2(M_2) \cong e_2 J^1$.\newline
But the indecomposable module $P_1 / S_1$ is not a direct summand of a module of the form
$$P' \oplus \Omega^2(\Omega^{-2}(P_1/S_1))=P' \oplus S_1 \oplus S_2 \oplus e_2J^1 \oplus e_2 J^1$$
\noindent for any projective module $P'$. Hence, it is not a $2^{\text{nd}}$ syzygy module.
\end{proof}

The subcategory $\tau ( \Omega^2(\mod-A))$ and equivalently the subcategory $\Tr ( \Omega^2(\mod-A))$ is therefore not extension-closed.

\section{QPA calculation}
In this section we illustrate how to verify that our result is correct over the field with three elements using QPA. You can copy and paste the following code into GAP.

\begin{tiny}
\begin{verbatim}
LoadPackage("qpa");
Q:=Quiver(2,[[1,1,"x"],[1,2,"y"],[2,1,"z"]]);kQ:=PathAlgebra(GF(3),Q);AssignGeneratorVariables(kQ);rel:=[x*y,y*z,z*x,x^3];A:=kQ/rel;
projA:=IndecProjectiveModules(A);
P1:=projA[1];P2:=projA[2];
M2:=CoKernel(SocleOfModuleInclusion(P2));
I_neu:=RightIdeal(A, [A.x+A.z]);O:=RightAlgebraModuleToPathAlgebraMatModule(RightAlgebraModule(A, \*, I_neu));
M1:=TransposeOfModule(NthSyzygy(TransposeOfModule(O),1));
IsomorphicModules(M1,DTr(NthSyzygy(M1,2)));
IsomorphicModules(M2,DTr(NthSyzygy(M2,2)));
IsIndecomposableModule(M1);
IsIndecomposableModule(M2);
Size(ExtOverAlgebra(M2,M1)[2]);
ext := ExtOverAlgebra(M2,M1);
maps := ext[2];
monos := List( maps, h -> PushOut( ext[1], h )[ 1 ] );
u:=monos[1];
IsInjective(u);
IsomorphicModules(Source(u),M1);
IsomorphicModules(CoKernel(u),M2);
W:=Range(u);
WW:=DecomposeModule(W);
OO:=WW[1];IsomorphicModules(OO,P2);
U:=WW[2];
IsIndecomposableModule(U);
K:=TrD(U);
IsNthSyzygy(K,2);
\end{verbatim}
\end{tiny}
Here we constructed an injective map $u: M_1 \rightarrow W$, where the modules $M_1$ and $M_2:=\text{Coker}(u)$ are indecomposable and satisfy $M_1 \cong \tau_3(M_1)$ and $M_2 \cong \tau_3(M_2)$.
It is verified that the module $W$ is isomorphic to $P_2 \oplus U$, where $P_2$ is the indecomposable projective module corresponding to the second vertex and $U$ is some indecomposable module that is not in $\tau(\Omega^2(\mod-A))$, since $\tau^{-1}(U)$ is not a second syzygy module.
The subcategory $\tau(\Omega^2(\mod-A))$ is therefore not extension-closed.

\section*{Acknowledgements} 
Bernhard B\"ohmler gratefully acknowledges funding by the DFG (SFB/TRR 195). Ren{\'e} Marczinzik gratefully acknowledges funding by the DFG (with project number 428999796). We profited from the use of the GAP-package \cite{QPA}.

\end{document}